\DeclareMathAlphabet\mathbfcal{OMS}{cmsy}{b}{n}
\newcommand{\BEAS}{\begin{eqnarray*}}
\newcommand{\EEAS}{\end{eqnarray*}}
\newcommand{\BEA}{\begin{eqnarray}}
\newcommand{\EEA}{\end{eqnarray}}
\newcommand{\BEQ}{\begin{equation}}
\newcommand{\EEQ}{\end{equation}}
\newcommand{\BAS}{\begin{align*}}
\newcommand{\EAS}{\end{align*}}
\newcommand{\BIT}{\begin{itemize}}
\newcommand{\EIT}{\end{itemize}}
\newcommand{\BNUM}{\begin{enumerate}}
\newcommand{\ENUM}{\end{enumerate}}
\newcommand{\BA}{\begin{array}}
\newcommand{\EA}{\end{array}}
\newcommand{\argmin}{\mathop{\rm argmin}}
\newcommand{\tr}{\mathop{ \rm tr}}
\newcommand{\rb}{\mathbb{R}}
\newcommand{\dv}[2]{{\frac{\partial #1}{\partial #2}}}
\newcommand{\BlackBox}{\rule{1.5ex}{1.5ex}}  % end of proof
\newtheorem{lemma}{Lemma}
\newtheorem{proposition}{Proposition}
\newtheorem{corollary}{Corollary}
\newtheorem{remark}{Remark}
\tikzstyle{state}=[shape=circle,draw=black!70,minimum size = 1cm,node distance = 3cm]
\tikzstyle{observation}=[shape=rectangle,draw=black!50]
\tikzstyle{lightedge}=[<-,dotted]
\tikzstyle{mainstate}=[state,thick]
\tikzstyle{mainedge}=[<-,thick]
\def \E{{\mathbb E}}
\newcommand{\bivec}[2]{\Big( \! \! \begin{array}{c} {#1} \\ {#2} \end{array} \! \! \Big)}
 \newcommand{\bimat}[4]{\bigg(  \! \! \begin{array}{c@{\hspace{2mm}}c} {#1} &  {#2} \\ {#3} &  {#4} \end{array}   \! \! \bigg)}
\title{\LARGE \bf
Variational Dynamic Programming for Stochastic Optimal Control
}
\author{ \parbox{2 in}{\centering Marc Lambert\\
         %\thanks{*Use the $\backslash$thanks command to put information here}\\
         DGA/CATOD\\%, Centre d'Analyse Technico-Op\'erationelle de D\'efense\\
         INRIA - ENS - PSL \\
         %6 rue Simone Iff Paris
         {\tt\small marc.lambert@inria.fr}} \hspace*{ 0.4 in}
         \parbox{2 in}{ \centering Francis Bach\\
         %\thanks{**The footnote marks may be inserted manually}
         INRIA - ENS - PSL  \\
         {\tt\small francis.bach@inria.fr}
         \hspace*{ 0.2 in}
         \parbox{2.2 in}{ \centering Silvère Bonnabel\\
         %\thanks{**The footnote marks may be inserted manually}\\ %, Universit\'e de la Nouvelle-Cal\'edonie \\
         MINES Paris PSL\\
         {\tt\small silvere.bonnabel@mines-paristech.fr}}
        }
}
\author{Marc Lambert, Francis Bach and Silvere Bonnabel
    \thanks{M. Lambert and F. Bach are with Inria, Departement d’Informatique de l’Ecole Normale Superieure, PSL Research University. %    } %
    %\thanks{
    S. Bonnabel is with Mines Paris PSL, PSL Research University, Centre for Robotics.  %{\tt\small silvere.bonnabel@minesparis.psl.eu}.
   % {\tt\small marc.lambert@inria.fr,francis.bach@inria.fr}
    }%
}
\begin{document}

\maketitle
\thispagestyle{empty}
\pagestyle{empty}

%%%%%%%%%%%%%%%%%%%%%%%%%%%%%%%%%%%%%%%%%%%%%%%%%%%%%%%%%%%%%%%%%%%%%%%%%%%%%%%%
\begin{abstract}
We consider the problem of stochastic optimal control, where the state-feedback control policies take the form of a probability distribution and where a penalty on the entropy is added.  By viewing the cost function as a Kullback-Leibler (KL) divergence between two joint distributions, we bring the tools from variational inference to bear on our optimal control problem. This allows for deriving a dynamic programming principle, where the value function is defined as a KL divergence again. We then resort to Gaussian distributions to approximate the control policies and apply the theory to control affine nonlinear systems with quadratic costs. This results in closed-form recursive updates, which generalize LQR control and the backward Riccati equation. We illustrate this novel method on the simple problem of stabilizing an inverted pendulum.  
\end{abstract}

%%%%%%%%%%%%%%%%%%%%%%%%%%%%%%%%%%%%%%%%%%%%%%%%%%%%%%%%%%%%%%%%%%%%%%%%%%%%%%%%
\section{INTRODUCTION} \label{section1}

In this article, we consider a stochastic dynamical system governed in discrete time by a known Markovian transition $p(x_{k+1}| x_k, u_k)$ where  $x_k  \in \mathbb{R}^d$  is the current state and $u_k \in \mathbb{R}^m$ is the control variable with $m \leq d$. The initial state $x_0$ is supposed to be known. To precisely state our positioning and our contributions, we need to introduce the notation of the classical setup. For the expectations $\int p(x) f(x) dx$, we use the notation $\E[f(x)]$ or $\E_{p(x)}[f(x)]$.

\subsection{Basics of Stochastic Optimal Control}
To control   future states starting from $x_0$ and over a finite horizon~$K$,  we first  consider the stochastic finite horizon optimal control problem in discrete time: 
\BEA
 \hspace{-0.05cm} \underset{u_0,\dots,u_{K-1}}{\min} \E \Big[ \sum_{k=0}^{K-1} \ell_k(x_k,u_k)  + L_K(x_K)\Big], \label{SOC}
\EEA
where the expectation is under the stochastic trajectories starting from $x_0$; $\ell_k$ denote the stage   cost functions for $0\leq k\leq K-1$ and $L_K$ the final cost function. These functions are supposed to be continuous. 

In this context, the goal is typically to derive causal state-feedback control policies $u_k=\varphi_k(x_0,\dots,x_{k-1})$ so as to solve \eqref{SOC}. A key result in that regard is that of dynamic programming, which states that one may define a  value function $V_K$ based on the final cost $V_K(x_K):=L_K(x_K)$, and then define $V_k$ through the backward  recursion:
\BEA
V_k(x_k)=\underset{v}{\min} \ \ell_k(x_k,v)\! +\! \E_{ p(x_{k+1}\mid x_k,v)}   \Big[ V_{k+1}(x_{k+1}) \Big].
\EEA
$V_k$ is a function defined over the entire state space, termed ``cost-to-go," also called value function, that encapsulates the minimum cost when  (deterministically) starting from $x_k$. This yields an optimal causal state-feedback policy
\begin{equation*}
    \varphi^*(x_k)=\underset{v}{\argmin} \quad \ell_k(x_k,v) + \E_{ p(x_{k+1}\mid x_k,v)}   \Big[ V_{k+1}(x_{k+1})\Big],
\end{equation*}
defining a sequence of control inputs that minimize \eqref{SOC}. 
 
\subsection{Probabilistic State-Feedback Policy}
A somewhat different problem arises when the control policy is taken as a   \emph{probability distribution} (a density) of the form $p(u_k|x_k)$ instead of $u_k=\varphi_k(x_k)$.     Letting   $z_{0:K} := ( u_0,x_1,u_1,\dots, x_{K-1},u_{K-1},x_K)$, its density then decomposes using the Markov property as follows:
\BEA
p(z_{0:K}|x_0) =  \prod_{k=0}^{K-1} p(u_k|x_k) p(x_{k+1}|x_k,u_k). \label{graph:eq}
\EEA

As is common in graphical models, we will overload notation by letting letter  $p$ denote all probability densities. The associated graphical model is shown in figure \ref{PGMforward}.

\begin{figure}[thpb]
\centering
\begin{center}
\includegraphics[scale=0.75]{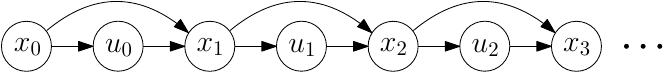}
\end{center}
\vspace*{-.4cm}
\caption{Graphical model associated with \eqref{graph:eq}, where $u_k|x_k$ is a probability distribution.}
\label{PGMforward}
\end{figure}
 
This turns \eqref{SOC} into the alternative control problem 
\BEA
\underset{p(u_k|x_k),0\leq k\leq  K-1}{\min} \!\E_{p(z_{0:K}|x_0)} \!\Big[ \sum_{k=0}^{K-1} \!\ell_k(x_k,u_k)  \!+\! L_K(x_K)\Big].\!\! \label{SOC2}
\EEA 

As is, the argmin over policies $p(u_k|x_k)$ consists of Dirac distributions $\varphi^*(x_k)$, and one recovers the optimal deterministic state-feedback policy for \eqref{SOC}. To obtain a random policy, one can add to the cost \eqref{SOC2} a penalty of magnitude $\varepsilon$ on the negentropy of the policy function $\int \! p(u_k|x_k) \log p(u_k|x_k) du_k$,   as proposed in \cite{Ziebart2008,Haarnoja2018}, leading to the following regularized problem :
  \begin{align}
\underset{p(u_k|x_k),0\leq k\leq  K-1}{\min} & \E_{p(z_{0:K}|x_0)} \Big[ \sum_{k=0}^{K-1} \Big(\ell_k(x_k,u_k) \label{MaxEnt}  \\
&+ \varepsilon \log p(u_k|x_k) \Big)  + L_K(x_K) \Big], \nonumber 
  \end{align}
 where $\varepsilon$ is a ``temperature" parameter.  Note that, as  $\varepsilon \rightarrow 0$, we recover the deterministic policy \eqref{SOC}.    

It turns out the dynamic programming principle carries over to the problem above, see \cite{Ziebart2010,Oswin2022}. Starting from   $V_K^{(c)}(x_K):=L_K(x_K)$, we may define a cost-to-go through the backward recursion: 
\BEA
V_k^{(c)}(x_k)& :=& \underset{p(u_k|x_k)}{\min} \E_{p(u_k|x_k) p(x_{k+1}|x_k,u_k)} \Big[ \ell_k(x_k,u_k) \nonumber\\
&& \hspace*{.5cm} + \varepsilon \log p(u_k|x_k) + V_{k+1}^{(c)}(x_{k+1})\Big],   \label{Bellman-Vc}
\EEA
where superscript  $(c)$ stands for conditional entropy regularization. 
An appealing aspect of this conditional entropy regularization is that the policy %and the optimal cost-to-go  
can be expressed exactly \cite{Ziebart2010}:
\BEA
p^*(u_k|x_k) \propto \exp \Big[\! - \! \frac{1}{\varepsilon} \big(
\ell_k(x_k,u_k) \!+\! \E[  V_{k+1}^{(c)}(x_{k+1})]\big) \Big]. \! \label{OptimPolicyMaxEnt}
\EEA

\subsection{Considered Problem}
Instead of penalizing the negentropy of the state-feedback policy, as done in \eqref{MaxEnt}, we propose to consider the classical stochastic optimal control problem \eqref{SOC} with an additional penalty $+\varepsilon \log p(z_{0:K}|x_0)$, that is, penalizing the negentropy  of the \emph{full} joint distribution $ p(z_{0:K}|x_0)$. Given the decomposition  \eqref{graph:eq}, this means we consider the problem:
\BEA
&\underset{p(u_k|x_k),0\leq k\leq  K-1}{\min} \E_{p(z_{0:K}|x_0)} \Big[ \sum_{k=0}^{K-1} \Big(\ell_k(x_k,u_k) \label{KLcontrolprev0}  \\
&+ \varepsilon \log p(u_k|x_k) + \varepsilon \log p(x_{k+1}|x_k,u_k) \Big)  + L_K(x_K) \Big]. \nonumber 
\EEA
Starting from   $V_K^{(f)}(x_K):=L_K(x_K)$, we now define a cost-to-go through the backward recursion 
\BEA
   &V_k^{(f)}(x_k)=\underset{p(u_k|x_k)}{\min} \E_{p(u_k|x_k) p(x_{k+1}|x_k,u_k)} \Big[ \ell_k(x_k,u_k) \label{Dyna:eq}\\
   &+ \varepsilon \log p(u_k|x_k)+\varepsilon \log p(x_{k+1}|x_k,u_k)+ V_{k+1}^{(f)}(x_{k+1})\Big], \nonumber
\EEA
where superscript  $(f)$ stands for full entropy regularization.   \eqref{Dyna:eq} consists of \eqref{Bellman-Vc} plus the term $ \varepsilon \log p(x_{k+1}|x_k,u_k)$.  

This variational dynamic programming principle  minimizes indeed the total loss \eqref{KLcontrolprev0} which is exactly $V_0^{(f)}(x_0)$.
Moreover, the problem \eqref{KLcontrolprev0} can be recast as a Kullback-Leibler (KL) divergence between two joint distributions. The \textit{exact} optimal solution to this problem can be analytically expressed using variational inference methods \cite{Wainwright08}, thereby generalizing equation \eqref{OptimPolicyMaxEnt}.

We briefly discuss the rationale behind this joint entropic regularization. First, penalizing the negentropy of $p(u_k|x_k)$ fosters distributions of greater entropy, that is, ``flatter'' distributions, which result in softer controls. %In the context of reinforcement learning,  this regularization was used to enhance exploration \cite{Haarnoja2018}.  
Our formulation goes further by explicitly penalizing the entropy of the states through the term $ \varepsilon \log p(x_{k+1}|x_k,u_k)$, leading to further exploration of the state space. The formulation is compatible with stochastic control or model-based reinforcement learning where we have access to   $p(x_{k+1}|x_k,u_k)$.

\subsection{Variational Approximation of the Optimal Control}
In practice, determining the exact optimal policy for either conditional negentropy penalization \eqref{MaxEnt}  or full negentropy penalization \eqref{KLcontrolprev0} is generally difficult, as unnormalized formulas such as   \eqref{OptimPolicyMaxEnt} make the computation of simple features such as the mean and covariance  typically intractable. Hence, it is desirable to approximate the optimal policy \eqref{OptimPolicyMaxEnt}  by a parametric family of distributions $q(u_k|x_k)$, like a Gaussian distribution, considering the variational problem \cite{Haarnoja2018}:
\BEA
&\underset{q(u_k|x_k)}{ \arg \min} & {\rm KL}\big(q(u_k|x_k)\| p^*(u_k|x_k)\big) \label{SACpolicy} 
\EEA
where $\rm KL$ is the unnormalized Kullback-Leibler divergence defined by $ {\rm KL}(q(y)\|p(y)):=\int q(y)\log q(y)dy-\int q(y)\log p(y)dy.$ Since the optimal policy depends on the cost-to-go \eqref{Bellman-Vc}, this function needs also to be estimated  leading to the soft-actor critic algorithm \cite{Haarnoja2018} where the policy and cost-to-go are updated alternately.
 However, we propose to consider instead a joint approximation of both the policy \emph{and} the cost-to-go. In particular, the left KL divergence between a parametric family $q(x_k)$ and the Gibbs distribution associated to the cost-to-go \eqref{Dyna:eq}:  
\BEA
&\underset{q(x_k)}{\min}  &\quad {\rm KL}\big(q(x_k)\| \exp(-V_k^{(f)}(x_k)/\varepsilon)\big), \label{fit:crit}
\EEA
can be rewritten as a variational problem on the joint model $q(u_k,x_k)=q(u_k|x_k)q(x_k)$. In this paper, we consider this joint Gaussian distribution and derive a recursive update for its parameters. In particular we show the precision matrix of the Gaussian marginal  $q(x_k)$ follows an implicit backward equation that generalizes the   Riccati equation from linear quadatric control (LQR).

\subsection{Related Works}
\paragraph{On Maximum Entropy Policy}
In the context of reinforcement learning, the maximum entropy policy is used to enhance exploration, as with the soft actor-critic algorithm \cite{Haarnoja2018} discussed above. In stochastic control, the maximum entropy policy problem  \eqref{MaxEnt} has been introduced to learn the cost function from an observed policy. Several applications are discussed in  \cite{Ziebart2010} like the two-player game where random policies provide unpredictable trajectories. Gaussian approximation of the optimal maximum entropy policy \eqref{OptimPolicyMaxEnt} is considered in the context of differential dynamic programming \cite{watson20a,Oswin2022}. To achieve this the dynamics are linearized, and the cost function is approximated locally with a quadratic cost. Our variational approach avoids these linearizations. 
\paragraph{On the KL formulation of Stochastic Optimal Control}
%Our Variational Dynamic Principle \eqref{KLcontrolprev0} differs from ``KL control" \cite{Todorov2009}. 
Our variational formulation  \eqref{KLcontrolprev0},\eqref{KLcontrol} differs from previous KL formulations proposed in stochastic control. In ``KL control" \cite{Todorov2009}, the KL penalizes a discrepancy between the controlled dynamic $p(x_{k+1}|x_k,u_k)$ and the passive one $p(x_{k+1}|x_k,u_k=0)$, and thus serves as an indirect penalty on input usage. 
``KL control" is also related to inference in graphical models in \cite{Kappen2012}, path integral and risk sensitivity \cite{Theodorou2012, Fleming06} where a \textquote{log-sum-exp} variant of the Bellman recursion was proposed. A KL cost setting was also proposed as an extension of the Schr\"odinger bridge problem for stochastic control, see  \cite{Chen2021}.  All these approaches do not use the regularization with the entropy of the policy and do not provide a random policy. 
A KL formulation close to \eqref{KLcontrolprev0},\eqref{KLcontrol} with a random policy was proposed in \cite{Rawlik07} and related to a cost-to-go regularized with the entropy of the policy. In this setting, the rewards are seen as observations, and the optimal policy is computed by variational inference. The case of control-affine inputs is discussed, but no closed-form updates have been derived. In \cite{Giovanni22}, a KL divergence between two joint distributions—representing the learned dynamic and a reference dynamic—was proposed to design control policies from demonstrations, providing an explicit solution for the optimal policy. However, the reference dynamic was not explicitly linked to any cost function.

\subsection{Main Contributions and Paper Organization}
This paper aims to address problem \eqref{KLcontrolprev0}, or ~equivalently \eqref{Dyna:eq}, using the variational inference (VI) framework to derive both theoretical and practical approximate solutions.  Our contributions are listed below:
\begin{itemize}
\item In Section \ref{section2}, we turn \eqref{Dyna:eq} into variational dynamic programming, by re-writing it as a Kullback-Leibler (KL) minimization problem. We give the exact formulation of the optimal policy and cost-to-go.
\item Still in Section \ref{section2}, we show how one may approximate \emph{jointly} the policy and the value function with a    distribution $q(x_k,u_k)$. Restricting to the class of  Gaussian distributions, we may immediately deduce the approximated policy $q(u_k|x_k)$ and approximated value function $q(x_k)$. 
\item In Section \ref{section3}, we consider the particular case of control-affine nonlinear dynamics with a quadratic cost function, and we show we can derive explicit formulas for the optimal parameters of the Gaussian $q(x_k,u_k)$, leading to novel variational backward Riccati equations.
\item In Section \ref{section4}, we compare the obtained policy and linearized LQR for the stabilization of a noisy (inverted) pendulum around an equilibrium point and show how our policy increases entropy while stabilizing. 
\end{itemize} 

\section{Variational Dynamic Programming } \label{section2}
\subsection{A Variational Dynamic Programming Principle}
Following \cite{Todorov08}, \cite{Toussaint2009}, we can rewrite the cost as follows: $\ell_k(x_k,u_k)  = -  {\varepsilon} \log r(x_k,u_k)$, and $L_K(x_K)  = - {\varepsilon}  \log r(x_K)$ where $\varepsilon > 0$ is the same temperature parameter introduced in equation \eqref{MaxEnt}. Here, $r(x_k,u_k)$ and $r(x_K)$ can be interpreted as reward distributions, taking the form of an unnormalized Gibbs distributions. One may then associate an unnormalized joint distribution with the cost function. Given that the initial state $x_0$ is known, this joint distribution can be factored as follows:
\BEA
r(z_{0:K}|x_0) = \Big( \prod_{k=0}^{K-1} r(x_k,u_k) \Big) r(x_K).
\EEA 
Problem  \eqref{KLcontrolprev0}, which we address, then rewrites:
\BEA
&\underset{p(u_k|x_k),0 \leq k \leq K-1}{\min} \varepsilon {\rm KL}(p(z_{0:K}|x_0)\|r(z_{0:K}|x_0)). \label{KLcontrol}
\EEA
The dynamic programming recursion \ref{Dyna:eq} can also translated into a KL minimization problem. Using the Gibbs formulation for the loss $\ell_k(x_k,u_k)=-\varepsilon \log r(x_k,u_k)$, Equation \eqref{Dyna:eq} rewrites:
\begin{alignat}{2}
&V_k^{(f)}(x_k) \label{Dyna:eq2}\\
&\hspace{-0.1cm} = \underset{p(u_k|x_k)}{\min} \varepsilon {\rm KL}(p(x_{k+1}|u_k,x_k)p(u_k|x_k)\|r(x_k,u_k)\phi(x_{k+1}))\nonumber, 
\end{alignat}
where we let: \begin{align}
    \phi(x_{k+1}):=\exp{(-V_{k+1}^{(f)}(x_{k+1})/\varepsilon)}. \label{phi:def}
\end{align}
This problem can be solved exactly using the properties of KL divergences. 

\subsection{The ``Exact" Optimal Policy}
We now give our first main result, which generalizes \eqref{OptimPolicyMaxEnt}; the proof is postponed to Appendix \ref{AppendixA}.

\begin{proposition} \label{Proposition1}
The solution to problem \eqref{Dyna:eq2} is given by:
\begin{alignat*}{2}
&p^*(u_{k}|x_k) = \frac{1}{\phi(x_k)} \exp \big(- Q_k^{f} (u_k,x_k)\big) \\
&Q_k^{f} (u_k,x_k)={\rm KL}(p(x_{k+1}|u_k,x_{k})\|r(u_k,x_k)\phi(x_{k+1})).
\end{alignat*}
The optimal cost-to-go $V_k^{(f)}(x_k)$ depends on $\phi(x_k)$, the partition function of $p^*(u_{k}|x_k)$ as follows: 
\begin{alignat*}{2}
V_k^{(f)}(x_k)&=-\varepsilon \log \phi(x_k) \\
&= -\varepsilon \log \int \exp \big(- Q_k^{f} (u_k,x_k) \big) du_k,
\end{alignat*}
such that $V_k^{(f)}(x_k)$ takes a \textquote{log-sum-exp} form. \BlackBox
\end{proposition}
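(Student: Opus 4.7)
The plan is to reduce the joint minimization in \eqref{Dyna:eq2} to a standard variational identity by factoring the joint density $p(x_{k+1}|u_k,x_k)p(u_k|x_k)$ against the unnormalized ``reference'' $r(x_k,u_k)\phi(x_{k+1})$, and then invoking the well-known fact that $\min_{q} \mathrm{KL}(q(u)\|e^{-Q(u)})=-\log \int e^{-Q(u)}du$, attained at $q^*(u)\propto e^{-Q(u)}$.

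First I would expand the KL in \eqref{Dyna:eq2} by splitting the log of the product and using Fubini:
\begin{align*}
&\mathrm{KL}\bigl(p(x_{k+1}|u_k,x_k)p(u_k|x_k)\,\|\,r(x_k,u_k)\phi(x_{k+1})\bigr)\\
&=\int p(u_k|x_k)\!\!\int \! p(x_{k+1}|u_k,x_k)\log\tfrac{p(x_{k+1}|u_k,x_k)}{r(x_k,u_k)\phi(x_{k+1})}dx_{k+1}du_k\\
&\quad +\int p(u_k|x_k)\log p(u_k|x_k)\,du_k,
\end{align*}
where in the second line I have used that $p(x_{k+1}|u_k,x_k)$ integrates to one, so the $\log p(u_k|x_k)$ factor pulls out. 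Recognizing the inner integral as $Q_k^{f}(u_k,x_k)$ (noting that although $r(x_k,u_k)$ does not depend on $x_{k+1}$, it can equivalently be placed inside the unnormalized KL), the whole expression becomes
\begin{equation*}
\int p(u_k|x_k)\log\frac{p(u_k|x_k)}{\exp\bigl(-Q_k^{f}(u_k,x_k)\bigr)}\,du_k,
\end{equation*}
which is itself an unnormalized KL of $p(u_k|x_k)$ against the Gibbs-like object $\exp(-Q_k^{f}(u_k,x_k))$.

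Next I would apply the variational lemma: for any positive function $e^{-Q(u)}$ with finite integral $Z:=\int e^{-Q(u)}du$, writing $\tilde p(u)=e^{-Q(u)}/Z$ gives $\mathrm{KL}(q\|e^{-Q})=\mathrm{KL}(q\|\tilde p)-\log Z$, so the minimum over $q$ is $-\log Z$, attained uniquely at $q=\tilde p$. Applied with $Q=Q_k^{f}(\cdot,x_k)$ and $Z=\int \exp(-Q_k^{f}(u_k,x_k))\,du_k$, this yields the stated $p^*(u_k|x_k)=\exp(-Q_k^{f}(u_k,x_k))/Z$.

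Finally I would identify $Z$ with $\phi(x_k)$. By construction the optimal value of $\varepsilon$ times the KL equals $V_k^{(f)}(x_k)$, hence $V_k^{(f)}(x_k)=-\varepsilon\log Z$, which together with the definition $\phi(x_k)=\exp(-V_k^{(f)}(x_k)/\varepsilon)$ of \eqref{phi:def} forces $\phi(x_k)=Z=\int \exp(-Q_k^{f}(u_k,x_k))du_k$; the log-sum-exp form of $V_k^{(f)}$ follows immediately. The main subtlety, and the only place I would slow down, is bookkeeping with the \emph{unnormalized} KL convention adopted in the paper: both in the outer KL (where $r\phi$ is not a probability density) and in the inner variational step (where $e^{-Q}$ need not integrate to one), one has to verify that the constant terms absorbed when switching between normalized and unnormalized forms are exactly the $-\log Z=\log\phi(x_k)$ that gives the cost-to-go.
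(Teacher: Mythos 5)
Your proposal is correct and follows essentially the same route as the paper: the paper packages your inline decomposition into a general lemma (splitting the joint KL into the entropy of $p(u_k|x_k)$ plus the conditional term, recognizing the result as an unnormalized KL against $\exp(-Q_k^{f})$, and minimizing to get the normalized Gibbs policy with optimal value $-\log Z$). Your bookkeeping of the factor $\varepsilon$ and the identification $\phi(x_k)=Z$ via the definition of $\phi$ in \eqref{phi:def} matches the paper's argument.
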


We now have a fully defined Bellman-like recursion to solve the entropy-regularized problem \eqref{KLcontrol}. Albeit interesting at a theoretical level, the latter formula is difficult to apply in practice. Indeed, akin to Bayesian inference, it is generally not analytically tractable, so one has to resort to approximations.  

\subsection{Joint Variational Approximation}

Given a well-chosen family $\mathcal{P}_k^{(u)}$ of densities $p(u_k|x_k)$, our goal is to maximize the same objective function but with this added constraint. This leads to the same recursion as~\eqref{Dyna:eq}, but with the extra constraint
that $p(u_k|x_k) \in \mathcal{P}_k^{(u)}$. 

The problem is now the representation of the resulting function $V^{(f)}(x_k)$, which needs to be simple enough to be propagated. Sticking with our representation of the costs in the form $V_k^{(f)}(x_k) = - \varepsilon \log \phi(x_k)$, we may approximate  in turn $\phi(x_k)$, using a well-chosen family $\mathcal{P}_k^{(x)}$ of (unnormalized) probability distributions. This is achieved via the variational approximation problem  introduced in  \eqref{fit:crit}.
%To be consistent with our KL framework, we choose at each step the following fitting criterion to approximate the cost-to-go as \eqref{fit:crit}.
%\BEA\min_{q(x_k) \in \mathcal{P}_k^{(x)}} \varepsilon {\rm KL}( q(x_k) \| \exp( - V_k^{(f)}(x_k) / \varepsilon ) ).\label{fit:crit}\EEA
%Note that the distribution on the right is not normalized, but normalizing it would not change the margin $q$. 

Interestingly, it turns out that, by combining the problem of searching (restricted) optimal control policies $q(u_k|x_k)$ over $\mathcal{P}_k^{(u)}$ using \eqref{Dyna:eq2} and optimal approximations of the cost-to-go $q(x_k)$ over $\mathcal{P}_k^{(x)}$ using \eqref{fit:crit}, we recover a similar KL minimization problem, but over the \emph{joint} distribution $ q(u_k|x_k) q(x_k)$. Let us indeed substitute \eqref{Dyna:eq2} into problem~\eqref{fit:crit}:
\begin{align}
    &\min_{q(x_k) \in \mathcal{P}_k^{(x)}} \varepsilon {\rm KL}( q(x_k) \| \exp( - V_k^{(f)}(x_k) / \varepsilon ) ) \nonumber
    \\&=\!\!\min_{q(x_k) \in \mathcal{P}_k^{(x)}} \varepsilon \int q(x_k)\log(q(x_k))dx_k\!+\!\!\int\! q(x_k)V_k^{(f)}(x_k)dx_k \nonumber 
     \\&=\min_{q(x_k) \in \mathcal{P}_k^{(x)}} \min_{q(u_k|x_k) \in \mathcal{P}_k^{(u)}} \label{VI-KL}   \\&\quad \varepsilon  {\rm KL}(\underbrace{q(x_k)q(u_k|x_k)}_{\text{joint}}p(x_{k+1}|x_{k},u_k)\|r(u_k,x_{k})\phi(x_{k+1})),\nonumber
\end{align}

%\noteML{
%We write the (suboptimal) cost-to-go for a policy $q$:
%\begin{alignat}{2}
%&{V_k^{(f)}}^q(x_k) :=  \varepsilon {\rm KL}(p(x_{k+1}|u_k,x_k)q(u_k|x_k)\|r(x_k,u_k)\phi(x_{k+1}))\nonumber, 
%\end{alignat}
%\begin{align}
%    &\min_{q(x_k) \in \mathcal{P}_k^{(x)}} \varepsilon {\rm KL}( q(x_k) \| \exp( - {V_k^{(f)}}^q(x_k) / \varepsilon ) ) \nonumber
%    \\&=\!\!\min_{q(x_k) \in \mathcal{P}_k^{(x)}} \varepsilon \int q(x_k)\log(q(x_k))dx_k\!+\!\!\int\! q(x_k){V_k^{(f)}}^q(x_k) dx_k \nonumber 
%     \\&=\min_{q(x_k) \in \mathcal{P}_k^{(x)}} \varepsilon  {\rm KL}(\underbrace{q(x_k)q(u_k|x_k)}_{\text{joint}}p(x_{k+1}|x_{k},u_k)\|r(u_k,x_{k})\phi(x_{k+1})),\nonumber
%\end{align}
%We then take the min on $q(u|x)$.\\
%}
where we have formed the joint entropy using the relation 
$H(q(x_k))+\int q(x_k) H(q(u_k|x_k)p(x_{k+1}|x_{k},u_k)) dx_k=H(q(x_k)q(u_k|x_k)p(x_{k+1}|x_{k},u_k))$ where $H(p(x)):=-\int p(x) \log p(x) dx$ is the entropy. This relation comes from the fact that $q(u_k|x_k)$ and $p(x_{k+1}|x_{k},u_k)$ are normalized. 
Of course, in practice, $\phi(x_{k+1})$
is approximated in the previous step by $q(x_{k+1})$, and should be replaced accordingly. 
 
This is quite practical when $ q(u_k|x_k) q(x_k)$ ends up being in a simple family, so that we are faced with the usual variational approximation consisting of a (left) KL minimization of a function of $(x_k,u_k)$. 

\subsection{Gaussian   Approximation}\label{Gauss:sec}

Although various approximating families can be leveraged, the simplest is arguably to use a joint Gaussian distribution    $q(x_k,u_k)=q(u_k | x_k) q(x_k)$. We then intend to learn its parameters based on the obtained Bellman-like equations and immediately benefit from Gaussian conditioning formulas to recover $q(x_k)$ and $q(u_k | x_k)$. We parametrize this joint Gaussian as follows
 \begin{alignat}{2}
q(x_k,u_k) &:= \mathcal{N}\big(\mu_k, \Sigma_k \big) \label{Qvalue}\\
&=\mathcal{N}\bigg(\bivec{\alpha_k}{\beta_k}, \varepsilon \bimat{P_k^{-1}}{P_k^{-1} K_k^\top}{K_k P_k^{-1}}{K_k P_k^{-1} K_k^\top + S_k^{-1}} \bigg) \nonumber\\
q(x_k) &= \mathcal{N}(\alpha_{k},  {\varepsilon} P_{k}^{-1})\\
q(u_k | x_k) &=\mathcal{N}( \beta_{k} + K_k(x_k - \alpha_k) ,  {\varepsilon} S_{k}^{-1}). 
\end{alignat}

\begin{remark}
    The rationale for using a joint Gaussian distribution is as follows. With this choice, the feedback policy  $q(u_k | x_k)$ takes the form of a distribution dispersed around its mean. More interestingly, the fact that $q(x_k)$ is Gaussian means we use a quadratic approximation for the value function, as $q(x_k)$ is an approximation of $\phi(x_k)$. Finally, note that this choice of joint distribution enforces a linear feedback $\beta_{k} + K_k(x_k - \alpha_k)$ in terms of the mean of $q(u_k | x_k)$. 
\end{remark}

\section{Variational Backward Riccati Equation} \label{section3}
We have seen that the original entropy-regularized stochastic optimal control problem \eqref{KLcontrolprev0} is amenable to the dynamic programming recursion \eqref{VI-KL}, when constraining the policy and value distribution to lie in some approximating families. If we opt for the  Gaussian family \eqref{Qvalue}, problem \eqref{VI-KL} becomes an optimization problem over the parameters   $\alpha_{k}$, $\beta_{k}$, $S_k$,  $P_k$ and $K_k$. Following our previous work on recursive variational Gaussian approximation \cite{rvga}, we seek to derive (backward) recursive equations for those parameters. To achieve this, we focus on control-affine systems where the control inputs enter linearly into the dynamics. This includes various mechanical systems,   such as the cart-pole system or the two-link robot of \cite{spong2005underactuated}. Opting for quadratic cost functions, we then obtain equations that generalize the backward Riccati equation from LQR control.  

\subsection{Nonlinear Control-Affine Dynamics}
We focus on dynamics of  the following form:
\begin{equation}\label{dyna3:eq}
  x_{k+1}=f(x_k)+Bu_k+\nu_k, \qquad \nu_k\sim \mathcal N(0,C),
\end{equation}
where $B \in  \mathcal{M}_{d \times m}(\mathbb{R})$ and $C \in  \mathcal{M}_{d \times d}(\mathbb{R})$; $C \succ 0$.
It entails that $p(x_{k+1}|x_k,u_k) = \mathcal{N}(x_{k+1}|f(x_k)+Bu_k,  C)$. We also  choose to work with quadratic costs with $Q,P_K \in  \mathcal{M}_{d \times d}(\mathbb{R})$; $Q,P_K \succ 0$:
\begin{align}
    \ell(x_k,u_k)&= \textstyle \frac{1}{2}(x_k-x_k^*)^TQ(x_k-x_k^*)+\frac{1}{2}u_k^TRu_k, \nonumber \\  L(x_K)&=\textstyle \frac{1}{2}(x_K-x_K^*)^TP_K(x_K-x_K^*),\label{cost:eq:q}
\end{align}
where $x_k^*$ for $k=1,\dots, K$ is the reference trajectory. Our results will remain valid if the matrixes $B, C, Q, R$ depend on $k$. We start with a Gaussian centered at $\alpha_K=x_K^*$. 
\subsection{Variational Backward Riccati Equation}
We now show that the solution to the problem \eqref{VI-KL} is given by a generalization of the backward Riccati equation (the proof is postponed to Appendix~\ref{AppendixB}).

\begin{proposition} \label{Proposition2}
Consider the dynamic programming recursion \eqref{VI-KL} stemming from problem \eqref{KLcontrolprev0}, with dynamics \eqref{dyna3:eq} and with costs \eqref{cost:eq:q}. Suppose the ``value distribution"  \eqref{phi:def} at previous step is in   the form of a Gaussian $\phi(x_{k+1})=\mathcal{N}(\alpha_{k+1},  {\varepsilon} P_{k+1}^{-1})$ with known parameters $\alpha_{k+1},P_{k+1}$. Then, the optimal joint Gaussian \eqref{Qvalue} for the problem \eqref{VI-KL} satisfies:
\BEQ
\begin{aligned}
&q(x_k) = \mathcal{N}( \alpha_{k},  {\varepsilon} P_{k}^{-1}) \nonumber\\
&q(u_k | x_k) =\mathcal{N}(\beta_{k} + K_k(x_k - \alpha_k) ,  {\varepsilon} S_{k}^{-1}), \nonumber
\end{aligned} 
\EEQ
with $S_k$, $ \beta_{k}$ and $K_k$ given by 
\BEQ
\begin{aligned}
&S_k=R+B^TP_{k+1}B, \quad K_k=-S_k^{-1}B^TP_{k+1}\E_q \Big[\dv{f}{x}(x_k)\Big] \\
&\beta_k=-S_k^{-1}B^TP_{k+1}( \E_q \big[f(x_k) \big]- \alpha_{k+1} ), \label{general:eq0}
\end{aligned} 
\EEQ
and  where $\alpha_k$ and $P_k$ satisfy the generalized (implicit) backward Riccati equation 
 \begin{alignat}{2}
\alpha_k&=x_k^*-Q^{-1} \E_q\Big[\dv{f}{x}(x_k)^\top P_{k+1}  ( f(x_k)+Bu_k- \alpha_{k+1} )   \Big] \nonumber \\
 P_k&=Q-\E_q \Big[ \dv{f}{x}(x_k)\Big]^\top P_{k+1}BS_k^{-1}B^TP_{k+1}\E_q \Big[\dv{f}{x}(x_k)\Big] \nonumber \\
&\hspace*{2cm} +\E_q\Big[\dv{f}{x}(x_k)^\top P_{k+1}\dv{f}{x}(x_k)+H_k\Big ], \label{general:eq}
 \end{alignat}
where $H_k \in \mathcal{M}_d(\mathbb{R})$ is given by the tensor contraction of the Hessian of $f$: $$H_k[\mu,\nu]=\sum_{ij} (P_{k+1})_{ij}( f(x_k)+Bu_k- \alpha_{k+1} )_i\frac{\partial^2 f_j}{\partial x^\mu\partial x^{\nu}}.$$In all the expectancies above, subscript $q$ denotes the joint distribution $q(x_k,u_k).$ \BlackBox
\end{proposition}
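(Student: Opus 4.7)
The plan is to reduce the variational problem \eqref{VI-KL} to a standard Gaussian variational approximation of the form $\min_q \{-\varepsilon H(q) + \E_q[g(z_k)]\}$ and then apply first-order optimality conditions via Bonnet's and Price's identities.

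First, I would simplify the KL. Since $\phi(x_{k+1}) = \mathcal{N}(\alpha_{k+1}, \varepsilon P_{k+1}^{-1})$ and $-\varepsilon \log r(u_k,x_k) = \ell_k(x_k,u_k)$, I expand the KL in \eqref{VI-KL} and integrate out $x_{k+1}$ against the Gaussian $p(x_{k+1}\mid x_k,u_k) = \mathcal{N}(f(x_k)+Bu_k, C)$. The transition entropy $H(p(x_{k+1}|x_k,u_k))$ is a $q$-independent constant because $C$ does not depend on $(x_k,u_k)$, and the expectation of the quadratic $-\varepsilon \log \phi(x_{k+1})$ under $p(\cdot|x_k,u_k)$ becomes $\tfrac{1}{2}(f(x_k)+Bu_k-\alpha_{k+1})^T P_{k+1}(f(x_k)+Bu_k-\alpha_{k+1})$ plus a constant. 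Up to additive constants, \eqref{VI-KL} then reduces to minimizing
\begin{equation*}
\mathcal{L}(q) = -\varepsilon H(q(x_k,u_k)) + \E_q[g(x_k,u_k)],
\end{equation*}
with $g(x_k,u_k) = \ell_k(x_k,u_k) + \tfrac{1}{2}(f(x_k)+Bu_k-\alpha_{k+1})^T P_{k+1}(f(x_k)+Bu_k-\alpha_{k+1})$.

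Next, I would impose the Gaussian optimality conditions. Writing $z_k=(x_k,u_k)$ and $q=\mathcal{N}(\mu_k,\Sigma_k)$, Bonnet's identity $\partial_\mu \E_q[g] = \E_q[\nabla g]$ and Price's identity $\partial_\Sigma \E_q[g] = \tfrac{1}{2}\E_q[\nabla^2 g]$, combined with $-\varepsilon \partial_\Sigma H(q) = -\tfrac{\varepsilon}{2}\Sigma_k^{-1}$, give the stationarity conditions
\begin{equation*}
\E_q[\nabla g(z_k)] = 0, \qquad \Sigma_k^{-1} = \tfrac{1}{\varepsilon}\E_q[\nabla^2 g(z_k)].
\end{equation*}
Direct differentiation yields $\nabla_{u_k} g = Ru_k + B^T P_{k+1}(f(x_k)+Bu_k-\alpha_{k+1})$ and $\nabla_{x_k} g = Q(x_k-x_k^*) + F(x_k)^T P_{k+1}(f(x_k)+Bu_k-\alpha_{k+1})$, with $F := \partial f/\partial x$. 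The Hessian blocks are $\nabla^2_{uu}g = R+B^T P_{k+1}B$, $\nabla^2_{ux}g = B^T P_{k+1} F(x_k)$, and $\nabla^2_{xx}g = Q+F(x_k)^T P_{k+1}F(x_k) + H_k(x_k,u_k)$, where the final term is precisely the Hessian-of-$f$ contraction appearing in the statement.

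Finally, I would read off the five parameters. Taking expectations under $q$, the $u_k$ mean equation gives $(R+B^TP_{k+1}B)\beta_k + B^T P_{k+1}(\E_q[f(x_k)]-\alpha_{k+1}) = 0$, producing the stated $\beta_k$, and the $x_k$ mean equation yields the stated $\alpha_k$. For the precision, I would Schur-invert the parametrization \eqref{Qvalue}, whose inverse has $(u,u)$ block $S_k$, $(u,x)$ block $-S_k K_k$, and $(x,x)$ block $P_k + K_k^T S_k K_k$. Matching block-by-block with $\E_q[\nabla^2 g]$ successively gives $S_k = R+B^T P_{k+1}B$, then $K_k = -S_k^{-1}B^T P_{k+1}\E_q[F(x_k)]$, and finally the generalized implicit Riccati equation for $P_k$, the subtracted $-K_k^T S_k K_k$ contributing $-\E_q[F]^T P_{k+1} B S_k^{-1} B^T P_{k+1}\E_q[F]$. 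The principal obstacle is the careful bookkeeping of $H_k$ arising from the second derivatives of $f$ inside the quadratic form, and the Schur inversion that aligns \eqref{Qvalue} with the expected-Hessian equation; everything else is routine differentiation.
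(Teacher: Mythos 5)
Your proposal is correct and follows essentially the same route as the paper's proof: integrate out $x_{k+1}$ to reduce \eqref{VI-KL} to a Gaussian variational problem $\min_q\{-\varepsilon H(q)+\E_q[g]\}$, apply Bonnet's and Price's identities (the paper's ``Stein's lemma'') to get $\E_q[\nabla g]=0$ and $\Sigma_k^{-1}=\tfrac{1}{\varepsilon}\E_q[\nabla^2 g]$, and identify the five parameters by matching against the Schur-form inverse \eqref{invQvalue}. All the block computations, including the Hessian contraction $H_k$ and the $-K_k^\top S_k K_k$ term producing the Riccati correction, agree with the paper.
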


The obtained equation resembles the Riccati equation from LQR, but with the presence of expectations. These equations are implicit because the expectations are taken over the sought distribution. This is akin to our prior work in the field of probabilistic inference \cite{rvga}, and various techniques can allow us to get around this issue, as will be discussed in a few paragraphs.

We conclude this subsection with an additional result, proving that when $f$ is odd, the problem simplifies by symmetry (see proof in Appendix~\ref{AppendixB}).

\begin{lemma}\label{lemma1}
   Assume we start with a terminal cost that is centered, in the sense that $\phi(x_{K}):=\exp{(-L_K(x_K)/\varepsilon)}$ is (up to a normalization constant) a centered Gaussian $\mathcal N(0,\varepsilon P_K^{-1})$. Assume additionally $f(-x)=-f(x)$ for all $x$.   Then for all $k< K$ we have $\alpha_k=\beta_k=0 $.  \BlackBox
\end{lemma}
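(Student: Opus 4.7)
The plan is backward induction on $k$, from $k=K$ down to $k=0$, showing $\alpha_k=\beta_k=0$, equivalently that $\phi(x_k)$ remains a centered Gaussian $\mathcal N(0,\varepsilon P_k^{-1})$. The base case $\alpha_K=0$ is exactly the stated hypothesis. I implicitly read the lemma as also assuming $x_k^{*}=0$ for all $k$, since otherwise the $x_k^{*}$ term in \eqref{general:eq} prevents $\alpha_k=0$; with a centered terminal cost it is natural to take the stage targets to be zero as well (regulation to the origin).

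For the inductive step, I would use a symmetry argument rather than a brute substitution into the implicit equations of Proposition~\ref{Proposition2}. Consider the involution $\sigma:(x_k,u_k,x_{k+1})\mapsto(-x_k,-u_k,-x_{k+1})$. The transition kernel $p(x_{k+1}|x_k,u_k)=\mathcal N(x_{k+1}\,|\,f(x_k)+Bu_k,\,C)$ is invariant under $\sigma$ because $f$ is odd and $B(-u_k)=-Bu_k$; the reward $r(x_k,u_k)=\exp(-\ell_k(x_k,u_k)/\varepsilon)$ is invariant because $\ell_k$ is quadratic and centered; and $\phi(x_{k+1})$ is invariant by the induction hypothesis. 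Hence the integrand of the KL objective in \eqref{VI-KL} is invariant, so if $q^{*}(x_k,u_k)$ is a minimizer, so is its reflection $\bar q^{*}(x_k,u_k):=q^{*}(-x_k,-u_k)$.

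The restricted problem minimizes a KL divergence over joint Gaussians \eqref{Qvalue}, an exponential family in which the objective is strictly convex in natural coordinates, so the minimizer is unique. Uniqueness then forces $q^{*}=\bar q^{*}$, meaning $q^{*}$ is symmetric under $\sigma$, hence has zero mean. This yields $\alpha_k=\beta_k=0$, and since $q(x_k)=\phi(x_k)$ at convergence inherits the centered Gaussian form, the induction hypothesis is passed down to step $k-1$.

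A complementary sanity check directly in the explicit equations \eqref{general:eq0}--\eqref{general:eq} confirms this: differentiating $f(-x)=-f(x)$ shows that $\partial f/\partial x$ is even and that each component Hessian is odd, so with $\alpha_k=\beta_k=0$ every expectation appearing in the formulas for $\beta_k$ and $\alpha_k$ is the integral against the centered Gaussian $q$ of a function that is $\sigma$-odd in $(x_k,u_k)$, and therefore vanishes. The main obstacle, and the reason I prefer the symmetry-plus-uniqueness route, is precisely the implicit nature of those equations: the expectations are taken against the unknown $q$, so one cannot simply ``plug in and solve,'' but symmetry bypasses the need to.
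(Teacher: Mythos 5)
Your ``complementary sanity check'' is in fact the paper's entire proof: the argument there is precisely to posit $\alpha_k=\beta_k=0$, observe that $f$ odd makes $\E_q[f(x_k)]=0$, that $\dv{f}{x}$ is even so $\E_q[\dv{f}{x}(x_k)^\top P_{k+1}f(x_k)]=0$, and that $\E_q[\dv{f}{x}(x_k)^\top P_{k+1}Bu_k]=\E_{q(x_k)}[\dv{f}{x}(x_k)^\top P_{k+1}BK_kx_k]=0$ by the tower property and oddness, so that the centered parameters are consistent with the implicit stationarity equations \eqref{general:eq0}--\eqref{general:eq}. Your worry that ``one cannot simply plug in and solve'' is misplaced: exhibiting a consistent centered solution of the fixed-point equations is exactly what the lemma asserts and what the paper does, and your version of this check (working with $\sigma$-oddness of the integrand in $(x_k,u_k)$ jointly) is correct and even slightly cleaner. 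Your reading that $x_k^*=0$ must be assumed is also right --- the appendix equation $0=Q(\alpha_k-x_k^*)+\E_q[\cdots]$ forces it, and the paper leaves this implicit.

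The route you actually prefer, however, has a genuine gap: the uniqueness claim. The objective \eqref{VI-KL2} is $\E_{\mathcal N(\mu,\Sigma)}[g]-\tfrac12\log|\Sigma|$ with $g$ containing $(f(x_k)+Bu_k-\alpha_{k+1})^\top P_{k+1}(f(x_k)+Bu_k-\alpha_{k+1})$, which is non-convex in $(x_k,u_k)$ for nonlinear $f$; consequently $\mu\mapsto\E_{\mathcal N(\mu,\Sigma)}[g]$ need not be convex, and the target density $e^{-g}$ is not itself in the Gaussian exponential family, so neither the ``strictly convex in natural coordinates'' claim nor uniqueness of the minimizer follows. Without uniqueness, invariance of the objective under $\sigma$ only shows that the \emph{set} of minimizers is closed under reflection --- it is entirely consistent with a pair of non-centered minimizers $q^*$ and $\bar q^*$ exchanged by $\sigma$ (think of a symmetric double-well $g$). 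So symmetry-plus-uniqueness does not bypass the implicit equations; it quietly assumes a property that the plug-in verification does not need. Keep the stationarity/consistency argument as the proof and, if you wish, present the symmetry observation as motivation only.
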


 \subsection{Linear Case}
  In the case of stochastic linear dynamics with quadratic costs, one can wonder what our entropy regularized problem~\eqref{KLcontrolprev0} boils down to and what role the regularization parameter~$\varepsilon$ plays. By applying Proposition \ref{Proposition2}  with  $x^*_k=0$ for $k=1,\dots,K$,  we recover the LQR equations. To be more precise, the control policies--herein defined as Gaussian distributions--have their mean parameters governed by the LQR equations indeed, while their covariance matrices have a magnitude of order $\varepsilon$, which reflects the penalization on the negative entropy that prompts dispersion. 
  
\begin{corollary} \label{Corollary1}
In the linear case $f(x_k)=Ax_k$, the stochastic dynamic \eqref{dyna3:eq} becomes $x_{k+1}=Ax_k+Bu_k+\nu_k$ with $\nu_k\sim \mathcal N(0,C)$. Letting $\alpha_K=0$, the optimal policy and value distributions write:
\BEQ
\begin{aligned}
&&q(x_k) = \mathcal{N}(0,  {\varepsilon} {P_{k}}^{-1}), \quad q(u_k | x_k) =\mathcal{N}(K_kx_k ,  {\varepsilon} {S_{k}}^{-1}), \nonumber
\end{aligned} 
\EEQ
which notably means that the value function writes $V_k(x_k)=\frac{1}{2}x_k^TP_kx_k.$
The parameters are given by
\BEQ
\begin{aligned}
&S_k=R+B^TP_{k+1}B, \quad K_k=-{S_k}^{-1}B^TP_{k+1}A,
\end{aligned} 
\EEQ
and $P_k$ satisfies the classical backward Riccati equation:
 \begin{alignat}{2}
  &P_k=A^\top P_{k+1}A+Q \nonumber\\
 &-A^\top P_{k+1}B(R+B^TP_{k+1}B)^{-1}B^TP_{k+1}A. 
 \end{alignat}
\end{corollary}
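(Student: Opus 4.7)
The plan is to specialize Proposition \ref{Proposition2} to the linear case $f(x)=Ax$ and to invoke Lemma \ref{lemma1} to eliminate the mean parameters. Since $f$ is linear, its Jacobian $\partial f/\partial x=A$ is constant (so the expectation under $q$ is simply $A$), and its Hessian vanishes identically, which kills the tensor contraction $H_k$. These observations collapse all the expectations that were the source of implicitness in Proposition~\ref{Proposition2}, so the recursions become fully explicit.

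Next, I would verify the hypotheses of Lemma~\ref{lemma1}. The map $x\mapsto Ax$ is odd, and the corollary assumes $\alpha_K=0$ with $P_K\succ 0$, so the terminal $\phi(x_K)\propto\exp(-\tfrac{1}{2\varepsilon}x_K^\top P_K x_K)$ is a centered Gaussian. Lemma~\ref{lemma1} then gives $\alpha_k=\beta_k=0$ for every $k<K$ (a quick sanity check: plugging $\alpha_k=0$, $\alpha_{k+1}=0$ and $x_k^*=0$ into the $\alpha_k$ equation of Proposition~\ref{Proposition2} returns $0$, and the $\beta_k$ equation likewise returns $0$, confirming consistency).

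With the mean parameters zero and $\E_q[\partial f/\partial x]=A$, the formulas of Proposition~\ref{Proposition2} simplify directly:
\begin{align*}
S_k &= R + B^\top P_{k+1} B, \\
K_k &= -S_k^{-1} B^\top P_{k+1} A,
\end{align*}
and the backward Riccati recursion for $P_k$ becomes
\begin{equation*}
P_k = Q + A^\top P_{k+1} A - A^\top P_{k+1} B (R+B^\top P_{k+1} B)^{-1} B^\top P_{k+1} A,
\end{equation*}
which is precisely the classical LQR Riccati equation. The corresponding marginal and conditional policies are $q(x_k)=\mathcal N(0,\varepsilon P_k^{-1})$ and $q(u_k\mid x_k)=\mathcal N(K_k x_k,\varepsilon S_k^{-1})$ as claimed.

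Finally, for the value function, I would use the identity $V_k^{(f)}(x_k) = -\varepsilon\log\phi(x_k)$ from Proposition~\ref{Proposition1}, together with the Gaussian approximation $\phi(x_k)\approx q(x_k)=\mathcal N(0,\varepsilon P_k^{-1})$. Expanding the log density yields $V_k(x_k)=\tfrac12 x_k^\top P_k x_k$ up to a $k$-dependent constant (which is irrelevant for the argmin over policies and can be absorbed into the normalization of $\phi$). There is no genuine obstacle in this proof; it is a verification that the general nonlinear formulas reduce to LQR. The only subtlety worth noting is the additive constant in $V_k$ arising from the dynamics noise covariance $C$ and from the normalization of the Gaussian, which is why the statement gives only the quadratic form $\tfrac12 x_k^\top P_k x_k$.
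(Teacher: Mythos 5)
Your proposal is correct and follows essentially the same route as the paper: invoke Lemma~\ref{lemma1} (oddness of $x\mapsto Ax$ plus the centered terminal Gaussian) to set $\alpha_k=\beta_k=0$, observe that the Jacobian is the constant $A$ and the Hessian contraction $H_k$ vanishes, and substitute into the formulas of Proposition~\ref{Proposition2} to recover the classical Riccati recursion. Your additional remarks on the consistency check and the additive constant in $V_k$ are sound but not needed beyond what the paper's two-line proof already contains.
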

\begin{proof}
Since $f(x_k)=Ax_k$ is odd, Lemma \ref{lemma1} shows that $\alpha_k=\beta_k=0$. Moreover $H_k=0$ and replacing $\dv{f}{x}(x_k)$ with $A$ gives the  equations above.
\end{proof} 

\subsection{Discussion}\label{discuss:sec}

In the case of control-affine nonlinear dynamics, and assuming centered distributions to simplify, we see we essentially recover LQR equations where $A$ is replaced with an expectation   of the form
 $$\E_q \Big[\dv{f}{x}(x_k)\Big] =\int \dv{f}{x}(x)\tilde C\exp\Big(-\frac{x^TP_kx}{2\varepsilon }\Big)|P_k|^{-1/2}\frac{1}{\sqrt\varepsilon}dx.$$A change of variables shows this is equal to
 $$
\int\dv{f}{x}(\sqrt \varepsilon y)\tilde C\exp\Big(-\frac{y^TP_ky}{2  }\Big)|P_k|^{-1/2}dy.
 $$  We see the effect of entropy regularization is to perform an average of magnitude $\sqrt \varepsilon$ around the equilibrium (assuming~$0$ is the equilibrium we seek to stabilize), and as $\varepsilon\to 0$ we have $\E_q \Big[\dv{f}{x}(x_k)\Big] \to\dv{f}{x}(0)$, and we recover the LQR equations linearized at equilibrium. 

Note that the equations are implicit. In \eqref{general:eq0},  the definition of $P_k$ is based on an average over $q$, whose variance is $P_k/\varepsilon$, which reminds our previous work on variational inference \cite{rvga}. In practice,    we can cycle as follows for small~$\varepsilon$. We assume $\varepsilon=0$ initially, which gives a first estimate for $P_k$ based on the linearization at equilibrium, as previously explained. Then, we may recompute, letting the obtained~$P_k$ be the variance of $q$. After a few iterations, the scheme converges in practice. 

\begin{remark}
Note that the control gain of our policy \eqref{general:eq0}  is defined by $K_k=-{S_k}^{-1}B^TP_{k+1}\mathbb{E}_q[\dv{f}{x}(x)]$. Taking an average is likely to make the policy more robust to model uncertainty; see, e.g.,  \cite{Ziebart2010}. \end{remark}
\begin{remark}
Another attractive property of our policy is that it allows for the computation of controls when the dynamics $f$ is nondifferentiable. Indeed, we can avoid computing the Jacobian matrix of the dynamics considering instead the Jacobian matrix of the Gaussian: $\mathbb{E}_q[\dv{f}{x}(x)]=-\int \dv{q}{x} (x) f(x) dx $. This equality results from integration by part on the Gaussian $q$, which has a support that vanishes at $\pm \infty$. Nondifferentiable control appears, for example, in collision detection with randomized smoothing   \cite{Montaut2023}. 
\end{remark}
 
\section{Variational Control of a Pendulum} \label{section4}
To illustrate the method and to gain some insight into the obtained optimal solution, we focus on the case study of a pendulum controlled by a torque $u$ and perturbed by a noise $w$. This is a simple example but sufficiently nonlinear to showcase the differences between linearized LQR and entropy-regularized optimal control. The dynamics write
\BEAS
&  \ddot\theta+\lambda \dot \theta  - \omega^2 \sin\theta  = \frac{1}{m \ell^2} u  + \sqrt{\eta}  w,
\EEAS
where $\theta$ is the angle with respect to the pendulum at the unstable equilibrium (upward position), $\omega=\sqrt{g/\ell}$ is the pulsation,  $\lambda=\xi/m$ the damping parameter and $\eta>0$ is the magnitude of the noise. 
In state-space form, the dynamics are discretized in time as follows:
\BEAS
\begin{pmatrix}\theta_{k+1}\\ \dot{\theta}_{k+1}\end{pmatrix}\!\!\!&\!\!\!\!\! = \!\!\!\!\!& \!\begin{pmatrix}\theta_k\\ \dot{\theta}_k\end{pmatrix}+\delta t \begin{pmatrix} \dot{\theta}_k\\ -\lambda \dot \theta_k + \omega^2 \sin \theta_k\end{pmatrix} \\
&&\hspace*{.0cm}+\delta t \begin{pmatrix}0\\ \frac{1}{m \ell^2} \end{pmatrix} u_k + \sqrt{\delta t \eta} \begin{pmatrix}0\\ 1 \end{pmatrix}w,\quad w \sim \mathcal{N}(0,1),\\
&\!\!\!\!\! := \!\!\!\!\!& f(x_k)+Bu_k+\nu_k.
\EEAS
The discrete cost writes
\BEAS
&&   x_K^TQx_K + \sum_{k=0}^{K-1} x_k^TQx_k + u_k^TRu_k.
 \EEAS 
Starting from $\theta_0$ we seek to stabilize the inverted pendulum while penalizing the entropy of the policy. 

We will compare our variational control with the control given by LQR with dynamics linearized around the equilibrium $x^*=0$, that is,  letting  $A=\begin{pmatrix} 1 & \delta t  \\  \delta t  g/\ell & 1 - \delta t  \lambda \end{pmatrix}$.

\subsection{Computation of the Solution}
Since the dynamics of the inverted pendulum satisfy the oddness condition of Lemma \ref{lemma1}, we have $\alpha_k=0$ and $\beta_k=0$, and the optimal policy is given by $q(u_k|x_k)=\mathcal{N}\big(K_kx_k ,\varepsilon S_k^{-1}\big)$ with $K_k$ and $S_k$ defined in Proposition~\ref{Proposition2}. To compute this optimal policy, there are two hurdles: the variational Riccati equation \eqref{general:eq} is implicit, and there are expectations to compute.  As already mentioned in Section~\ref{discuss:sec},  to cope with the fact the equation is implicit, we may open the loop and iterate on the equation in an inner loop. As concerns the expectations under Gaussians, they are approximated using quadrature rules:  
$$\int \mathcal{N}(\mu,P) g(x) dx \approx \sum_{i=1}^{M} w_i g(x_i),$$
where we can choose $M=2d$ cubature points \cite{Arasaratnam09} defined by $w_i=\frac{1}{2d}$ and $x_i=\mu+\sqrt{d} L e_i$ where $e_i$ are basis vectors in dimension $d$, and $L$ the square root matrix of the covariance such that the points are equally spread at the edge of the Gaussian ellipsoid.

\subsection{Numerical Results}
We take the following parameters: $g=9.8, m=1, \ell=1$ and $\xi=1$.  We start at $\theta_0=\frac{\pi}{6}$ and $\dot \theta=0$, and we want to put the pendulum at $(\theta, \dot \theta)=(0,0)$ which corresponds by convention to the unstable equilibrium (upward position) such that the stable equilibrium (downward position) is at $\theta= {\pi} $.  We consider a backward pass with $1000$ iterations with stepsize  $\delta t=0.01$ such that the temporal horizon is $T=10s$. We simulate the Brownian motion with a Gaussian increment of covariance $\delta t \eta$ where $\eta=0.02$ $rd/s^2$ in the first experiment and $\eta=0.2$ $rd/s^2$ in the second one. The forward trajectory is simulated with a semi-implicit Euler-Maruyama scheme to better conserve the system’s energy. For the ``variational control,'' the implicit Riccati backward equation is iterated $10$ times in an inner loop; however,  we found out that one iteration could be used in practice without much affecting the results.

\paragraph{Average control}

We first apply the average value of the policy distribution by letting $u_k:=K_kx_k$.  Figure~\ref{Fig-XP1} illustrates the behavior in function of  $\sqrt{\varepsilon}$, and compares it to LQR control based on the system linearized at the equilibrium. We see clearly that for the smaller value of $\sqrt{\varepsilon}$, both controllers behave similarly, but when $\sqrt{\varepsilon}$ increases, the gains with the variational control are below the LQR gains, leading to softer controls based on averaging a trigonometric function around its maximum (softer controls may preserve actuators). To   underline the effect of $\varepsilon$, we have considered a small cost: $R=\delta t 0.01\mathbb{I}_m$, $Q=\delta t  0.01\mathbb{I}_d$ and $P_K=\delta t  \mathbb{I}_d$.

\begin{figure}[thpb]
\centering
\includegraphics[scale=0.41]{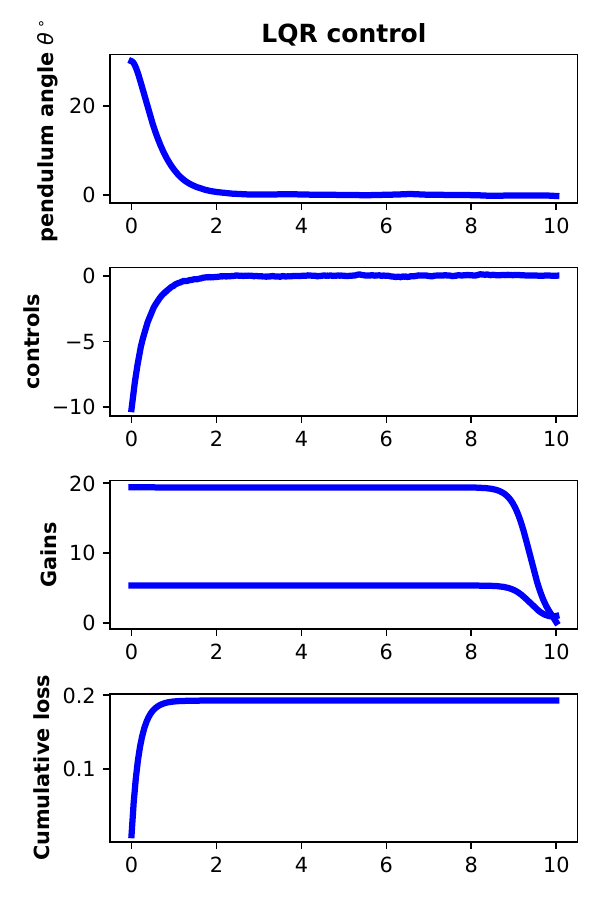}
\includegraphics[scale=0.41]{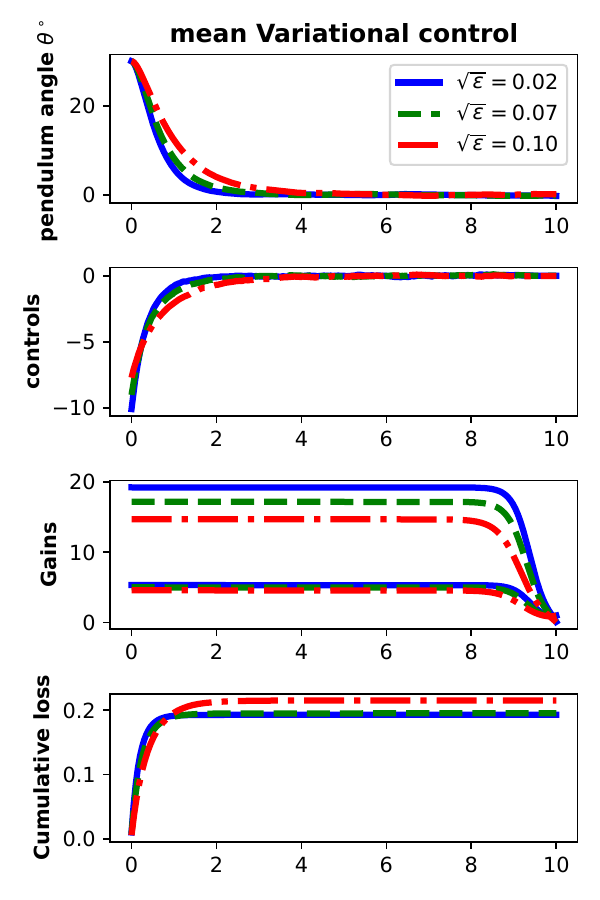}

\vspace*{-.3cm}

\caption{Linearized LQR at equilibrium versus our ``variational control''  for the inverted pendulum regulation where we apply for the KL the mean policy $\mathbb{E}[u_k |x_k]$ for different values of entropic regulation $\sqrt{\varepsilon}=0.02, 0.07, 0.10$. From the top to the bottom row, we show the angle $\theta$ converted in degrees, the control, the two gains for angle and angular velocity, and finally, we compare both LQR and variational control with the same LQR quadratic loss.  }
\label{Fig-XP1}
\end{figure}

\paragraph{Random control}
We now sample the control from the actual policy distribution $q(u_k | x_k) =\mathcal{N}(K_kx_k,  \varepsilon (R+B^TP_{k+1}B)^{-1})$. We consider a large terminal cost   $ P_K=\delta t  1000 \mathbb{I}_d$  but low stage costs   $R=\delta t 0.01\mathbb{I}_m$, $Q=\delta t  0.01\mathbb{I}_d$. In this way, we elicit high entropy along the path (hence exploration of the state space) while enforcing the final equilibrium state. Results are displayed in figure \ref{Fig-XP2}, where we see the empirical distribution of the state when applying random controls. The distribution $p(x_k)$ of the state spreads during the transient phase but shrinks to the equilibrium indeed at the final time $T$. 

\begin{figure}[thpb]
\centering
\includegraphics[scale=0.41]{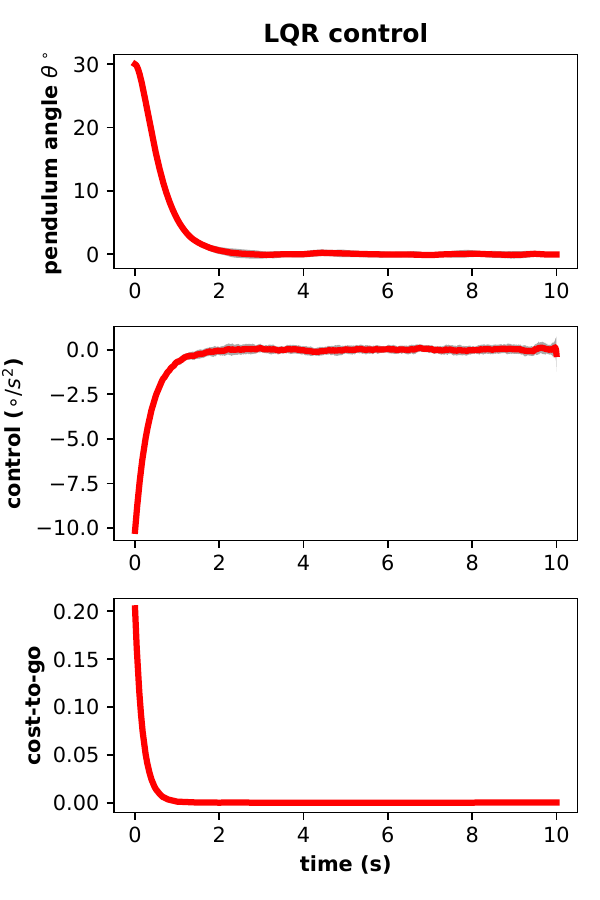}
\includegraphics[scale=0.41]{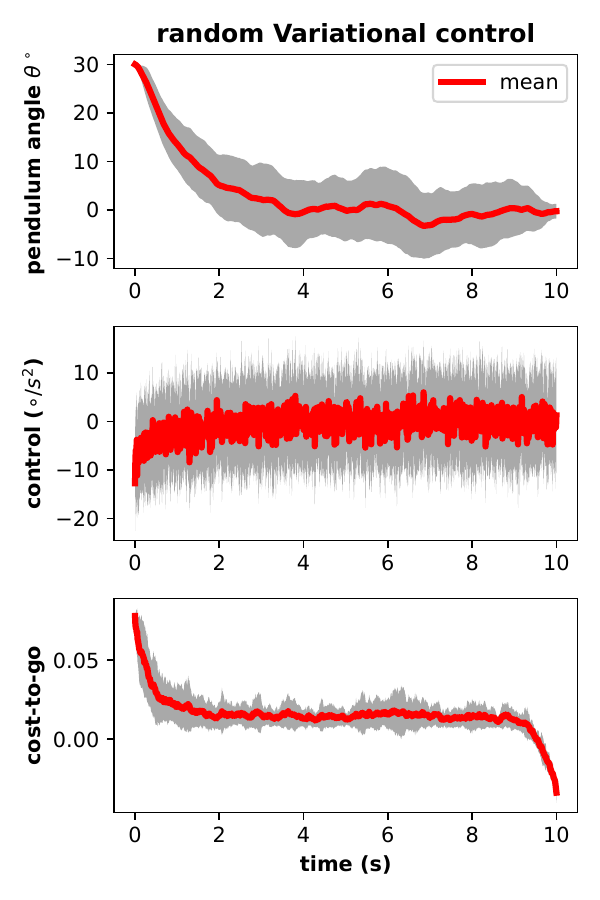}

\vspace*{-.4cm}

\caption{Linearized LQR versus ``Variational control'' for the inverted pendulum regulation where we sample randomly from the policy $q(u_k|x_k)$. The entropic regulation parameter is fixed to $\sqrt{\varepsilon}=0.10$. We execute $30$ Monte Carlo runs, and for each output,  we draw the empirical mean in red and the empirical standard deviation in grey. From the top to the bottom row, we show the angle $\theta$ converted in degrees, the control, and the cost-to-go at the current state. The cost-to-go at $x_k$ is defined by $\frac{1}{2}x_k^TP_kx_k$ for LQR and by $-\varepsilon \log q(x_k)=-\varepsilon \log \mathcal{N}(x_k|0,\varepsilon P_k^{-1})$ for variational control. }
\label{Fig-XP2}
\end{figure}

\textit{The sources of the code are available on Github on the following repository:\\ \url{https://github.com/marc-h-lambert/KL-control}.}

\addtolength{\textheight}{-3cm}   

%%%%%%%%%%%%%%%%%%%%%%%%%%%%%%%%%%%%%%%%%%%%%%%%%%%%%%%%%%%%%%%%%%%%%%%%%%%%%%%%
\section{CONCLUSION}
We have proposed a new setting for stochastic optimal control based on the entropic regularization of both the entropy of the dynamics and the entropy of the policy. This problem was reformulated as a KL divergence between two processes: the first defining the controlled stochastic trajectory, the second defining a reward process. Following a variational dynamic programming principle, we have shown we can compute the exact optimal policy and cost-to-go. However, in practice, it is impractical, and both the control policy and the cost-to-go need to be approximated. We showed that they can be approximated jointly with a Gaussian distribution. In the case of nonlinear dynamics with affine control inputs and quadratic costs, this approximation can be computed in closed form, leading to tractable formulas that generalize the backward Riccati equation from LQR control. 

To illustrate the results, we have performed simulations using our new policy on a second-order system. Using the average policy results in softer control with smaller gains than LQR, whereas the random policy causes dispersion in the state space during the transient phase.

The proposed method paves the way for future work: the control affine model can be made richer by considering a state-dependent control matrix $B(x)$ and a state-dependent covariance of Brownian motion $C(x)$. Moreover, we could use richer approximating families, such as mixtures of Gaussians, to more closely capture the value function. 

%%%%%%%%%%%%%%%%%%%%%%%%%%%%%%%%%%%%%%%%%%%%%%%%%%%%%%%%%%%%%%%%%%%%%%%%%%%%%%%%

\section{ACKNOWLEDGMENTS}
This work was funded by the French Defence Procurement Agency (DGA) and by the French government under the management of Agence Nationale de la Recherche as part of the “Investissements d’avenir” program, reference ANR-19-P3IA-0001 (PRAIRIE 3IA Institute).

%%%%%%%%%%%%%%%%%%%%%%%%%%%%%%%%%%%%%%%%%%%%%%%%%%%%%%%%%%%%%%%%%%%%%%%%%%%%%%%%

\bibliographystyle{IEEEtran}
\bibliography{IEEEabrv,KLcontrol}
 
\section{Appendix A: Proof of Proposition \ref{Proposition1}} \label{AppendixA}
To prove the Proposition \ref{Proposition1} we use the 
following Lemma:
\begin{lemma}\label{lem22}
Let’s consider the following problem: 
\begin{alignat*}{2}
\underset{p(z) \in \mathcal{P}(\mathbb{R}^d)}{\min}  &{\rm KL}(p(z)p(x|z) \| h(x,z))\\
:=\underset{p(z) \in \mathcal{P}(\mathbb{R}^d)}{\min} &\int \int p(z)p(x|z)  \log \frac{p(z)p(x|z) }{h(x,z)}dxdz,
\end{alignat*}
where $p(z)$ and $p(x|z)$ are probability distributions and $h$ is a density function which may be unnormalized.  $\mathcal{P}(\mathbb{R}^d)$ is the space of probability distribution smoothed enough to admit a density function. Then, the minimum is attained at 
\begin{alignat*}{2}
&p^*(z) = \frac{1}{Z} \exp \Big(-\int p(x|z)  \log \frac{p(x|z) }{h(x,z)}dx \Big),
\end{alignat*}
where $Z$ is the normalization constant of $p^*(z)$. Moreover, the minimum is $-\log Z$. \BlackBox
\end{lemma}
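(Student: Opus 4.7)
The plan is to split the joint KL divergence using the chain rule $p(z)p(x|z)$, absorb the $x$-integral into a scalar function of $z$, and then recognize what remains as a standard KL divergence between $p(z)$ and the candidate optimum $p^*(z)$. Concretely, I would first expand
\begin{align*}
{\rm KL}(p(z)p(x|z)\|h(x,z)) &= \int p(z)\log p(z)\,dz \\
&\quad + \int p(z)\left[\int p(x|z)\log\frac{p(x|z)}{h(x,z)}\,dx\right]dz,
\end{align*}
using that $p(x|z)$ is a normalized density so $\int p(x|z)\,dx = 1$. This step turns the joint minimization over $p(z)$ into a problem where the $x$-contribution is frozen into a scalar profile in $z$.

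Next I would define $g(z) := \int p(x|z)\log\frac{p(x|z)}{h(x,z)}\,dx$ and set $Z := \int \exp(-g(z))\,dz$ (assuming $g$ is such that this normalization exists, which is the only non-trivial regularity assumption). With $p^*(z) := \exp(-g(z))/Z$, a direct algebraic manipulation gives
\begin{align*}
\int p(z)\log p(z)\,dz + \int p(z) g(z)\,dz &= \int p(z)\log\frac{p(z)}{\exp(-g(z))}\,dz \\
&= {\rm KL}(p(z)\|p^*(z)) - \log Z,
\end{align*}
where in the last line I used $\exp(-g(z)) = Z\,p^*(z)$ and that $p(z)$ integrates to one.

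The conclusion is then immediate from Gibbs' inequality: ${\rm KL}(p(z)\|p^*(z)) \geq 0$ with equality iff $p(z) = p^*(z)$ almost everywhere. Hence the minimum of the original objective equals $-\log Z$ and is attained at the announced $p^*(z)$.

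I do not anticipate any hard obstacle: the argument is essentially the standard derivation of the Gibbs variational principle, with the only subtlety being that $h$ is unnormalized (so $g(z)$ may not be a genuine KL divergence). This is harmless because the manipulations above only use that $\exp(-g)$ is integrable in $z$; the partition function $Z$ of $h$ in $x$ never needs to exist. The step most worth writing carefully is the rewriting of $\int p(z)\log p(z)\,dz + \int p(z) g(z)\,dz$ as ${\rm KL}(p\|p^*) - \log Z$, since it hinges on multiplying and dividing by $Z$ inside the logarithm and on the normalization of $p(z)$.
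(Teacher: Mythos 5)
Your proof is correct and follows essentially the same route as the paper's: split the joint KL via the chain rule, absorb the inner $x$-integral into $g(z)$, and recognize the remainder as ${\rm KL}(p\|p^*)-\log Z$, minimized by Gibbs' inequality. The only cosmetic difference is that you normalize $\exp(-g)$ explicitly before invoking nonnegativity of the KL, whereas the paper works with the unnormalized divergence and states the minimizer as $p(z)\propto f(z)$; the two are equivalent.
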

\begin{proof}
\begin{alignat*}{2}
& \int \int p(z) p(x|z) \log \frac{p(x|z)p(z) }{h(z,x)} dx dz\\[-.1cm]
&=\int p(z) \int p(x|z) \log p(z)dxdz \\[-.2cm]
&+ \int p(z) \int p(x|z) \log \frac{p(x|z)}{h(z,x)} dx dz\\[-.1cm]
&=\int p(z) \log p(z) dz - \int p(z) \log f(z) dz\\[-.1cm]
&\quad  \text{where} \quad f(z)=\exp \Big(-\int p(x|z) \log \frac{p(x|z)}{h(z,x)} dx \Big)\\[-.1cm]
&=KL(p(z)||f(z)) \quad \text{which is minimal for $p(z) \propto f(z)$ } \\[-.1cm]
&=-\log Z \quad \text{where $Z= \int f(z) dz$}. \\[-1.25cm]
\end{alignat*}
\end{proof}
Applying Lemma \ref{lem22} to  $z=u_k|x_k$, $x=x_{k+1}$ and $h(z,x)=r(x_k,u_k)\phi^*_{k+1}(x_{k+1})$, we obtain the desired result.

\section{Appendix B: Proof of Proposition \ref{Proposition2} and Lemma \ref{lemma1}} 
\label{AppendixB}
To show proposition \ref{Proposition2}, we first reformulate the problem \eqref{VI-KL} for our particular control-affine setting.
\paragraph{Reformulation of the problem}
 \begin{alignat*}{2}
&{\rm KL}(q(x_k,u_k)p(x_{k+1}|x_{k},u_k) \| \exp(-\ell(x_k,u_k)/\varepsilon)q(x_{k+1}))\\
&=-H(q(x_k,u_k)) - H(p(x_{k+1}|x_{k},u_k)) \\
&+ \int q(x_k,u_k) \frac{1}{\varepsilon}  \ell(x_k,u_k) dx_k du_k\\
&-  \int q(x_k,u_k) \int p(x_{k+1}|x_k,u_k) \log q(x_{k+1}) dx_{k+1} dx_k du_k,
\end{alignat*}
where $H$ is the  entropy operator which writes  $H(q(x_k,u_k))=\frac{1}{2}\log |\Sigma_k| + c$ and $H(p(x_{k+1}|x_{k},u_k))=c^\prime$ where $c$ and $c^\prime$ are constants independent of the variational parameters. 
The last integral on $p(x_{k+1}|x_k,u_k)$ simplifies as follows, denoting $p(x_{k+1}|x_k,u_k)$ by $p$:
 \begin{alignat*}{2}
 &\int p(x_{k+1}|x_k,u_k) \log q(x_{k+1}) dx_{k+1}:=\E_p[\log q(x_{k+1})]\\
 & =\E_p[\frac{1}{2 \varepsilon} (x_{k+1}-\alpha_{k+1})^\top P_{k+1}(x_{k+1}-\alpha_{k+1})]\\
& =\frac{1}{2 \varepsilon} (f(x_k)+Bu_k-\alpha_{k+1})^\top P_{k+1}(f(x_k)+Bu_k-\alpha_{k+1})\\
&+\E_p[\nu_k^\top P_{k+1}\nu_k],
\end{alignat*}
where $\E_p[\nu_k^\top P_{k+1}\nu_k]=\tr CP_{k+1}$ interestingly does not depend on variational parameters.
Finally,   \eqref{VI-KL}  reduces to:
 \begin{alignat}{2}
\underset{\mu_k,\Sigma_k}{\min} \quad \E_q[g(x_k,u_k)] -\frac{1}{2}\log |\Sigma_k|, \label{VI-KL2}
\end{alignat}
where $ \E_q$  denotes the expectation under $q(x_k,u_k)={\mathcal{N}(\mu_k,\Sigma_k)}$ and where $g$ is defined  as follows:
 \begin{alignat*}{2}
&g(x_k,u_k)  =    \frac{1}{2\varepsilon} ((x_k-x_k^*)^TQ(x_k-x_k^*)+u_k^TRu_k)\\
&+   \frac{1}{2\varepsilon} (  f(x_k)+Bu_k- \alpha_{k+1} )^\top P_{k+1} 
( f(x_k)+Bu_k- \alpha_{k+1} ).
\end{alignat*}

\paragraph{Closed from solution}
To solve \eqref{VI-KL2}, we use the property of integration under Gaussian distribution described in the following result known as Stein's Lemma: 
\begin{lemma} For a function $f: \rb^d \to \rb$,
\BEAS
&&\nabla_{\mu} \int \mathcal{N}(x|\mu,\Sigma) f(x) dx  =  \int \mathcal{N}(x|\mu,\Sigma)\nabla  f(x) dx  \\
&&\nabla_{\Sigma} \int \mathcal{N}(x|\mu,\Sigma) f(x) dx   =  \frac{1}{2} \int \mathcal{N}(x|\mu,\Sigma)\nabla^2 f(x)  dx.
\EEAS 
\end{lemma}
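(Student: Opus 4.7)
The plan is to prove both identities by (i) interchanging differentiation in $\mu$ or $\Sigma$ with the integral over $x$, (ii) using the algebraic fact that derivatives of the Gaussian density with respect to its parameters can be rewritten as derivatives with respect to $x$, and (iii) applying integration by parts in $x$ to transfer the derivatives onto $f$. The implicit standing assumption is that $f$ (and its relevant derivatives for the second identity) has at most polynomial growth, which both justifies the interchange of differentiation and integration by dominated convergence, and guarantees that the boundary terms at infinity vanish against the super-polynomially decaying Gaussian.

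For the gradient identity, I would differentiate the exponent of $\mathcal{N}(x|\mu,\Sigma) \propto \exp(-\tfrac{1}{2}(x-\mu)^\top \Sigma^{-1}(x-\mu))$ directly with respect to $\mu$ to obtain the key symmetry
\[
\nabla_\mu \mathcal{N}(x|\mu,\Sigma) \;=\; \Sigma^{-1}(x-\mu)\,\mathcal{N}(x|\mu,\Sigma) \;=\; -\nabla_x \mathcal{N}(x|\mu,\Sigma).
\]
Bringing $\nabla_\mu$ inside the integral, substituting the second equality, and performing a single integration by parts in $x$ transfers the gradient onto $f$ and produces the right-hand side of the first identity.

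For the Hessian identity, I would establish the analogous ``heat-kernel'' relation
\[
2\,\nabla_\Sigma \mathcal{N}(x|\mu,\Sigma) \;=\; \nabla_x \nabla_x^\top\, \mathcal{N}(x|\mu,\Sigma),
\]
which follows from direct computation on $\log \mathcal{N}(x|\mu,\Sigma)$, with careful treatment of the symmetry constraint on $\Sigma$ (either by computing an unconstrained matrix gradient and symmetrizing, or by differentiating on the space of symmetric matrices from the start). Bringing $\nabla_\Sigma$ inside the integral and integrating by parts twice in $x$ then yields $\tfrac{1}{2}\int \mathcal{N}(x|\mu,\Sigma)\,\nabla^2 f(x)\,dx$.

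The main obstacle is really the bookkeeping for the Hessian identity: verifying the $\nabla_\Sigma \leftrightarrow \nabla_x^2$ relation cleanly while respecting the symmetry of $\Sigma$, so that the resulting expression matches the right-hand side where $\nabla^2 f$ is naturally a symmetric matrix. Once this algebraic relation is in place, dominated convergence handles the interchange of differentiation and integral, and the boundary terms in the integration by parts vanish by the super-polynomial decay of the Gaussian. In context, this lemma will then be applied to the variational objective \eqref{VI-KL2}: writing the first-order optimality conditions in $\mu_k$ and $\Sigma_k$ as expectations of $\nabla g$ and $\nabla^2 g$ under $q(x_k,u_k)$, which are tractable under the control-affine dynamics and quadratic cost, produces the generalized backward Riccati equations of Proposition~\ref{Proposition2}.
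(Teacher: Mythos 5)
Your proposal is correct and follows essentially the same route as the paper's proof: both rest on the identities $\nabla_{\mu}\mathcal{N}(x|\mu,\Sigma)=-\nabla_{x}\mathcal{N}(x|\mu,\Sigma)$ and $\nabla_{\Sigma}\mathcal{N}(x|\mu,\Sigma)=\tfrac{1}{2}\nabla^2_{x}\mathcal{N}(x|\mu,\Sigma)$ followed by integration by parts to move the derivatives onto $f$. Your added care about growth conditions on $f$, vanishing boundary terms, and the symmetry constraint on $\Sigma$ only makes explicit what the paper leaves implicit.
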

\begin{proof}
The proof comes from integration by part and using the symmetric properties of Gaussians $\nabla_{\mu} \mathcal{N}(x|\mu,\Sigma) = -\nabla_{x} \mathcal{N}(x|\mu,\Sigma) $ and $\nabla_{\Sigma} \mathcal{N}(x|\mu,\Sigma) = \frac{1}{2} \nabla^2_{x} \mathcal{N}(x|\mu,\Sigma)$.
\end{proof}
Using this lemma and the relation $\nabla_{\Sigma}  \log |\Sigma| = \Sigma^{-1}$, the derivative  with respect to $\Sigma_k$ of the quantity \eqref{VI-KL2} writes:
$$
 \frac{1}{2} \E_q
\bigg[
\bimat{ \nabla_{xx} g_k(x_k,u_k)}{ \nabla_{xu} g_k(x_k,u_k)}{ \nabla_{ux} g_k(x_k,u_k)}{ \nabla_{uu} g_k(x_k,u_k)}
\bigg]-\frac{1}{2}\Sigma_k^{-1}.
$$
Writing $\nabla_{\Sigma}(\cdot)=0$ yields for the problem at hand
\BEAS
\Sigma_k^{-1}=\frac{1}{\varepsilon}  \bigg[
\bimat{ \varepsilon\E_q \Big[ \nabla_{xx} g(x_k,u_k) \Big]}{\E_q\Big[ \dv{f}{x}(x_k)^T\Big]P_{k+1}B}{ B^TP_{k+1}\E_q \Big[\dv{f}{x}(x_k) \Big] }{ R+B^TP_{k+1}B}
\bigg].
\EEAS
Recalling our model for the joint covariance as a $2 \times 2$ block matrix $\Sigma_k$ \eqref{Qvalue}, we can compare the above matrix with the inverse $\Sigma_k^{-1}$ given by :  
\begin{alignat}{2}
 &\Sigma_k^{-1} = \varepsilon^{-1} \bimat{P_k + K_k^\top S_k K_k }{-K_k^\top S_k}{ -S_k K_k}{S_k}. \label{invQvalue}
\end{alignat}
By identification, this readily yields
 \begin{alignat}{2}
S_k&=R+B^TP_{k+1}B \nonumber\\
-S_kK_k&=B^TP_{k+1}\E_q \Big[\dv{f}{x}(x_k)\Big] \nonumber\\
P_k + K_k^\top S_k K_k&=Q+\E_q \Big[ \dv{f}{x}(x_k)^TP_{k+1}\dv{f}{x}(x_k)+H_k \Big], \label{Hessian-form}
 \end{alignat}
 where the last equation comes from a computation of the upper left term $\E_q \Big[ \nabla_{xx} g_k(x_k,u_k) \Big]$. 
We then deduce the expression for $K_k$ and $P_k$. 

From Stein's lemma, the derivative  w.r.t. $\mu_k$ of  \eqref{VI-KL2} is   $\Big(\E_q \Big[\dv{g}{x}(x_k)\Big],\E_q \Big[ \dv{g}{u}(u_k) \Big]\Big)$. Setting it to zero gives :
\small
 \begin{alignat*}{2}
0&=Q (\alpha_k-x_k^*)+\E_q\Big[\dv{f}{x}(x_k)^TP_{k+1}  ( f(x_k)+Bu_k- \alpha_{k+1} )   \Big ]\\
0&=\frac{1}{\varepsilon}(R\beta_k+B^TP_{k+1}B\beta_k+ B^TP_{k+1}( \E_q \big[ f(x_k) \big]- \alpha_{k+1} ) ) 
 \end{alignat*}
 
 \normalsize
 from which we deduce the expression for $\alpha_k$ and $\beta_k$. 
\paragraph{Proof of Lemma \ref{lemma1} \label{proofLemma1}}
We now show the general equations \eqref{general:eq0}-\eqref{general:eq} may be simplified under oddness conditions. 
Assume $\alpha_{k+1}=0$. We let $\alpha_k=0$ and $\beta_k=0$, and we want to show the equations on $\alpha_k,\beta_k$ are satisfied. By doing so, we are dealing with centered expectancies. We have $\E [f(x_k)]=0$, proving $\beta_k=0$ is consistent with $\alpha_k=0$. 
Besides, we have  $\dv{f}{x}(-x)=\dv{f}{x}(x)$, which entails   $\alpha_k=0\Rightarrow \E [\dv{f}{x}(x_k)^TP_{k+1}  f(x_k)]=0.$ Finally, we write using the law of total expectation
\BEAS
&&\E_{q(x_k,u_k)} \Big[\dv{f}{x}(x_k)^TP_{k+1}  Bu_k \Big]\\
&&=\E_{q(x_k)} \Big[ \dv{f}{x}(x_k)^TP_{k+1}  B\E [u_k\mid x_k] \Big]\\
&&=\E_{q(x_k)} \Big[\dv{f}{x}(x_k)^TP_{k+1}  BK_k x_k \Big],
\EEAS and we use the fact we integrate an odd function w.r.t. a centered Gaussian.
\end{document}